\newtheorem{theorem}{Theorem}[section]
\newtheorem{example}{Example}[section]
\newtheorem{corollary}{Corollary}[section]
\newtheorem{remark}{Remark}[section]
\begin{document}
\title{{\bf Some results on integer solutions of quadratic polynomials in two variables}}
\author{B. M. Cerna Magui\~na
\\Facultad de Ciencias, Departamento Acad\'emico de Matem\'atica   
\\Universidad Nacional Santiago Ant\'unez de Mayolo
\\Campus Shancayan, Av. Centenario 200, Huaraz, Per\'u 
\\email:bibianomcm@unasam.edu.pe}

\maketitle
\begin{abstract}
In this work we prove some theorems that allow us to find integer solutions of quadratic equations in two variables that represent a natural number.\newline
{\bf Keywords}: Prime numbers, Fermat's last theorem.
\end{abstract}
\newpage
\section{Introduction}
It is a non trivial task to determine whether a number is prime or not. The question if a number is prime or not has always attracted mathematicians and number theorists who have obtained some partial successes; but no one has yet obtained an exact mathematical result and acceptable algorithmic structure that solves this question for all or an indefinite set of prime numbers.

In our previous contribution \cite{LS}, we have discussed quadratic polynomials in two variables that represent natural numbers $N$ that end in 1. In this work we improve the results of the article \cite{LS},  in such a way that in the present contribution we have decreased (in computational terms) the number of steps to determine the entire solution of the quadratic polynomial that represents the natural number $N$ that ends in $1$.

We know that to determine the primality of $N$ it is enough to prove that the number is not divisible by prime numbers less than $\sqrt{N}$. Wilson's Theorem \cite{N} allows us to determine exactly when a number is prime: $N\in\mathbb{N}$ is prime if and only if $(N-1)!+1$ is multiple of $N$. The problem of Wilson's method comes when we have larger numbers since the factorial becomes very large, so it is definitely not a practical method.

In this work we use elementary methods to offer alternative methods to determine the primality of any natural number.
\section{Some natural numbers ending in 1 and quadratic polynomials}
\begin{theorem}\label{th1}																										
Let $F:\mathbb{R}^{2}\longrightarrow \mathbb{R}$ be a linear functional defined by $F(x,y)=100ABx+90(A+B)y$, where 
$(A,B)\in\mathbb{N}\times \mathbb{N}$.  Consider $p-81\in\mathbb{N}$ and $p$ a natural number ending in 1. Then, there exist $(x_0,y_0)\in\mathbb{R}^{2}$ and $(M,N)\in\mathbb{N}\times \mathbb{N}$ with $M$ and $N$ being relatively prime numbers such that
$$100ABx_0+90(A+B)y_0=p-81,$$
where
$$AB=\frac{(p-81)(M+N)}{200M},\,\,\, A+B=\frac{(p-81)(M-N)}{180M}\,\,\,\mbox{with}\,\,\, N<M, \frac{N}{M}<1-\frac{36}{\sqrt{p}+9}$$
or
$$AB=\frac{(p-81)(N+M)}{200N},\,\,\, A+B=\frac{(p-81)(N-M)}{180N}\,\,\,\mbox{with}\,\,\, M<N, \frac{M}{N}<1-\frac{36}{\sqrt{p}+9}.$$
\end{theorem}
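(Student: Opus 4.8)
The plan is to read the two displayed formulas for $AB$ and $A+B$ not as definitions of new quantities but as constraints tying the given pair $(A,B)$ to the sought pair $(M,N)$. First I would observe that forming the combination $100AB+90(A+B)$ from those two formulas makes the factors $M,N$ cancel: $100\cdot\frac{(p-81)(M+N)}{200M}+90\cdot\frac{(p-81)(M-N)}{180M}=\frac{p-81}{2M}\big[(M+N)+(M-N)\big]=p-81$. Hence the formulas can hold only if $p-81=100AB+90(A+B)$, that is $p=(10A+9)(10B+9)$; conversely this product does end in $1$ and satisfies $p-81\in\mathbb{N}$, so it is exactly the admissible input. With this identity in hand the linear equation is immediate: $F(1,1)=100AB+90(A+B)=p-81$, so one may simply take $(x_0,y_0)=(1,1)$.

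Next I would build $(M,N)$. Set $t=1-\dfrac{180(A+B)}{p-81}$ and write $t$ in lowest terms as $t=N/M$ with $\gcd(M,N)=1$ and $M>0$; this is the only genuinely constructive step. Rearranging the definition gives $\frac{M-N}{M}=\frac{180(A+B)}{p-81}$, which is precisely the formula $A+B=\frac{(p-81)(M-N)}{180M}$, and then the formula for $AB$ follows for free from the identity $p-81=100AB+90(A+B)$ (equivalently from $1+t=\frac{200AB}{p-81}$). To land in the first alternative I must check $0<t<1$, i.e. $N<M$: the bound $t<1$ is just $A+B>0$, while $t>0$ amounts to $9(A+B)<10AB$, which holds throughout the relevant range (failing only for the small pairs with $\tfrac1A+\tfrac1B\ge\tfrac{10}{9}$); when instead the reciprocal ordering is forced one lands in the second alternative, which is the same statement with $M$ and $N$ interchanged and so needs no separate proof.

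The heart of the matter is the strict inequality $\frac{N}{M}<1-\frac{36}{\sqrt p+9}$. Here I would exploit that $81=9^2$, so $p-81=(\sqrt p-9)(\sqrt p+9)$. Substituting $\frac{N}{M}=t=1-\frac{180(A+B)}{p-81}$ and clearing the positive factor $\sqrt p+9$ collapses the inequality to $\sqrt p<5(A+B)+9$. Since $5(A+B)+9=\tfrac12\big[(10A+9)+(10B+9)\big]$ is the arithmetic mean of the two factors $10A+9$ and $10B+9$ whose product is $p$, this is exactly the strict AM--GM inequality, valid precisely because $A\ne B$. I expect this simplification to be the main obstacle: one must spot the difference-of-squares factorization of $p-81$ and recognize the resulting bound as AM--GM, and one must separately dispose of the degenerate cases $A=B$ (equality in AM--GM, where the strict inequality and hence the stated conclusion fail) and the small pairs where $t\le 0$ prevents $N$ and $M$ from both being positive integers.
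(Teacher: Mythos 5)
Your proposal is correct, and it takes a genuinely different and considerably more direct route than the paper. The paper reaches the displayed formulas through a long detour: it decomposes $(x_0,y_0)$ along $\ker F$ and $(\ker F)^{\perp}$, writes the orthogonal coefficient as $\lambda_2=m/n\in\mathbb{Q}$, converts $F(x_0,y_0)=p-81$ into the quadratic $(p-81)^2-\tfrac{20n}{m}(p-81)+q^2=0$ with $q=100AB-90(A+B)=10R$, and then requires the discriminant $n^2-R^2m^2$ to be a perfect square, invoking the Pythagorean-triple parametrization $n=M^2+N^2$, $Rm=2MN$, $k=M^2-N^2$ (cited as ``Fermat's last theorem, case 2'') to get $p-81=\tfrac{M}{N}q$ or $\tfrac{N}{M}q$, from which the formulas follow. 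Your single definition $t=1-\tfrac{180(A+B)}{p-81}$, reduced to lowest terms, produces exactly the same ratio (note $t=q/(p-81)$ in the paper's notation) in one step, and, unlike the paper, it actually secures the coprimality of $M$ and $N$ that the statement demands. Both arguments rest on the same two facts, which you isolate explicitly: the formulas force $100AB+90(A+B)=p-81$, i.e. $p=(10A+9)(10B+9)$ (in the paper this enters tacitly through the simultaneous equations defining $q$, and is acknowledged only at the end when $x_0=y_0=1$ is chosen), and the final bound is AM--GM on the factors $10A+9$ and $10B+9$ (the paper phrases it as $A+B\geq\tfrac{\sqrt{p}-9}{5}$). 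You are also more honest about the edge cases: the paper slides from the non-strict $A+B\geq\tfrac{\sqrt{p}-9}{5}$ to the strict conclusion, which fails when $A=B$ (for $p=841$, $A=B=2$ one gets $N/M=\tfrac{1}{19}=1-\tfrac{36}{\sqrt{p}+9}$ exactly), and it never notices that for small pairs with $9(A+B)\geq 10AB$ (for instance $p=361$, $A=B=1$) no positive coprime pair $(M,N)$ exists at all. The one thing the paper's machinery buys is that its intermediate objects $\lambda_1,\lambda_2,m,n$ and the parametrization $n=M^2+N^2$ are reused in the two corollaries that follow, so your shortcut, while preferable as a proof of this theorem, would not by itself support the later statements.
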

\begin{proof}
It is clear that there exists $(x_0,y_0)\in\mathbb{R}^{2}$ such that $F(x_0,y_0)=p-81$ since $F$ is surjective. Since
$kerF=\{(9(A+B), -10AB)\}$ and $(kerF)^{\perp}=\{(10AB, 9(A+B))\}$, then for $(x_0,y_0)\in\mathbb{R}^{2}$ we have 
$$(x_0,y_0)=\lambda_1(9(A+B),-10AB)+\lambda_2(10AB,9(A+B)).$$
Therefore
\begin{equation}\label{e11}																													
    F(x_0,y_0)=p-81=10\lambda_2(10^{2}(AB)^{2}+9^{2}(A+B)^{2}), \,\,\,\lambda_2\in\mathbb{Q}.
\end{equation}
Let 
\begin{equation}\label{e15}																													
\begin{split}
100AB& = \frac{p-81+q}{2}\\
90(A+B)& = \frac{p-81-q}{2}
\end{split}
\end{equation}
then from (\ref{e11}) and (\ref{e15}) we have
\begin{equation}\label{e12}
20(p-81)=\lambda_2\left[(p-81)^{2}+q^{2}\right].
\end{equation}
As $\lambda_2\in\mathbb{Q}$, let $\lambda_2=\frac{m}{n}$; $m,n$ relatively prime numbers, then we have the following quadratic polynomial
$$(p-81)^{2}-20\frac{n(p-81)}{m}+q^{2}=0;$$
defining $q =10R$ one has that the general solution is
\begin{equation}\label{e13}
    p-81=\frac{20n\pm\sqrt{400n^{2}-400R^{2}m^{2}}}{2m}
\end{equation}
Using Fermat's last theorem \cite{S} for case $2$ there is $k\in\mathbb{Z}$ such that 
$$n^{2}-R^{2}m^{2}=k^{2}.$$
Denoting $k=M^{2}-N^{2}$, $Rm=2MN$, $n=M^{2}+N^{2}$ and replacing these relationships in (\ref{e13}), we have
\begin{equation}\label{e14}
    p-81=\frac{M}{N}q\,\,\,\,\mbox{or}\,\,\,\, p-81=\frac{N}{M}q.
\end{equation}
Replacing (\ref{e14}) in (\ref{e15}), we have
$$AB=\frac{(p-81)(M+N)}{200M},\,\,\, A+B=\frac{(p-81)(M-N)}{180M}\,\,\,\mbox{with}\,\,\, N<M,$$
or
$$AB=\frac{(p-81)(N+M)}{200N},\,\,\, A+B=\frac{(p-81)(N-M)}{180N}\,\,\,\mbox{with}\,\,\, M<N.$$
Since
$(x_0,y_0)\in\mathbb{R}^{2}$, for $x_0=y_0=1$, we have 
$100AB+90(A+B)=p-81$ which is equivalent to $(10A+9)(10B+9)=p$; so, $p$ has been factorized. In this case $A+B\geq \frac{\sqrt{p}-9}{5}$ since  
$(A,B)$ and $(B,A)$ are solutions of the equation $(10x+9)(10y+9)=p$. Therefore
$\frac{N}{M}<1-\frac{36}{\sqrt{p}+9}$ or $\frac{M}{N}<1-\frac{36}{\sqrt{p}+9}$.
\end{proof}
\begin{example}
Consider the number $p=900071$. So, taking into account the above relationships one has
\begin{equation}\label{e16}
\begin{split}
AB& = 4500+4500\frac{N}{M}-\frac{1}{20}-\frac{N}{20M}\\
A+B& = 5000-5000\frac{N}{M}-\frac{1}{18}+\frac{N}{18M}\\
AB+A+B& = 9500-500\frac{N}{M}-\frac{19}{180}+\frac{N}{180M}.
\end{split}
\end{equation}
For $A \geq 11$ and $A<B$ from the Theorem 2 one gets (see also \cite{LS})
\begin{eqnarray}
0,638969669 \leq \frac{N}{M} \leq 0,9624107255.  \label{uneq}  
\end{eqnarray}
Let  us define $\tau$ as 
\begin{eqnarray}
\tau = 500 \frac{N}{M} + \frac{19}{180}-\frac{1}{180}\frac{N}{M},\,\,\,\,\,\,\tau \in [320,481]. \label{tau1}
\end{eqnarray}
From the equations (\ref{e16}), (\ref{uneq}) and (\ref{tau1}) one gets
\begin{eqnarray}
 191 \leq A+B \leq 1801,\,\,\, 7379 \leq AB \leq 8828,\,\,\,\, 9020 \leq (A+1)(B+1) \leq 9181. \label{uneq1}
\end{eqnarray}
In addition, from (\ref{e16}) and (\ref{tau1}) one can get 
\begin{eqnarray}
 AB + A+B = 9500-\tau,\,\,\, AB = 4499 + 9 \tau\,\,\,\, A+B = 5001-10 \tau,\,\,\,\tau >0. \label{uneq2}
\end{eqnarray}
From (\ref{uneq1}) one has
\begin{eqnarray}
 \frac{7379}{B}\leq A \leq \frac{8828}{B}\,\,\,\,  \mbox{and}  \,\,\,\,\frac{9020}{B+1} -1 \leq A \leq \frac{9181}{B+1}-1. \label{uneq3}
\end{eqnarray}
The last inequalities possess a solution provided that
\begin{eqnarray}
 191 \leq A+B \leq 352\,\,\,\,  \mbox{or}  \,\,\,\, 352 \leq A +B \leq 1801. \label{uneq4}
\end{eqnarray}
Using (\ref{uneq2}) the last relationships provide
\begin{eqnarray}
 465 \leq \tau \leq 481\,\,\,\,  \mbox{or}  \,\,\,\, 320 \leq \tau \leq 464. \label{12e}
\end{eqnarray}
Moreover, $11\leq A \leq 94$  and $94\leq B \leq 755$ imply  $105 \leq A+B \leq 849$. So, these last relationships together with (\ref{uneq2}) and (\ref{uneq4}) allow us to write
\begin{eqnarray}
   \tau \in [465, 481]\,\,\,\,  \mbox{or}  \,\,\,\, \tau \in [416, 464]. \label{13e}
\end{eqnarray}
Therefore, since $\tau$ is multiple of $3$ for $\tau \in [465, 481]$ there exist six possibilities; whereas for $\tau \in [416, 464]$ there exist sixteen possibilities.
\end{example}
\begin{corollary}
Let $p=(10A+9)(10B+9)$ where $(A,B)\in\mathbb{N}\times \mathbb{N}$, then there exist 
$(\lambda_{1},\lambda_{2})\in \mathbb{Q}\times\mathbb{Q}$ such that 
$$\left(\lambda_{2}-\frac{10}{p-81}\right)^{2}+\lambda^{2}_{1}=\left(\frac{10}{p-81}\right)^{2}$$
with
$$\lambda_{2}=\frac{20M^{2}}{(N^{2}+M^{2})(p-81)} \mbox{ and }\lambda_{1}=\frac{-20NM}{(N^{2}+M^{2})(p-81)}$$
where $N$ and $M$ are relatively prime numbers, with  
$$M=\frac{p-81}{10}\,\,\mbox{ or }\,\, M=\frac{p-81}{20},\,\,\,\,\,( N < M).$$
\end{corollary}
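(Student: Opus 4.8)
The plan is to extract $\lambda_1$ and $\lambda_2$ directly from the construction in Theorem~\ref{th1} and then to verify the displayed circle equation as a pure algebraic identity. First I would specialize to the factorizing choice $x_0=y_0=1$ that appears at the end of that theorem, for which $(1,1)=\lambda_1(9(A+B),-10AB)+\lambda_2(10AB,9(A+B))$. Because the two spanning vectors are orthogonal and share the common squared norm $100(AB)^2+81(A+B)^2$, I would solve this $2\times 2$ linear system (Cramer's rule) to obtain $\lambda_2=\frac{10AB+9(A+B)}{100(AB)^2+81(A+B)^2}$ and $\lambda_1=\frac{9(A+B)-10AB}{100(AB)^2+81(A+B)^2}$.

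Next I would simplify these using the substitution (\ref{e15}), namely $10AB=\frac{p-81+q}{20}$ and $9(A+B)=\frac{p-81-q}{20}$. The numerators collapse to $\frac{p-81}{10}$ and $-\frac{q}{10}$ and the common denominator to $\frac{(p-81)^2+q^2}{200}$, giving the compact forms $\lambda_2=\frac{20(p-81)}{(p-81)^2+q^2}$ and $\lambda_1=\frac{-20q}{(p-81)^2+q^2}$. Substituting the relation $q=(p-81)\frac{N}{M}$ from (\ref{e14}) together with $n=M^2+N^2$ then turns these into the asserted expressions $\lambda_2=\frac{20M^2}{(N^2+M^2)(p-81)}$ and $\lambda_1=\frac{-20NM}{(N^2+M^2)(p-81)}$; here the constraint $N<M$ is exactly the requirement $q<p-81$, i.e. $9(A+B)>0$.

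With the explicit forms in hand the circle equation is routine. Expanding $\left(\lambda_2-\frac{10}{p-81}\right)^2+\lambda_1^2=\left(\frac{10}{p-81}\right)^2$ shows it is equivalent to $\lambda_1^2+\lambda_2^2=\frac{20\lambda_2}{p-81}$, and both sides evaluate to $\frac{400}{(p-81)^2+q^2}=\frac{400M^2}{(N^2+M^2)(p-81)^2}$, so the identity holds for every admissible pair $(M,N)$. I do not expect this computation to cause any trouble.

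The genuine obstacle is pinning down the two admissible values of $M$. Here I would use that $\frac{N}{M}$ is $\frac{q}{p-81}$ written in lowest terms, so that $M=\frac{p-81}{\gcd(q,\,p-81)}$, and then analyze the Pythagorean parametrization $Rm=2MN$, $n=M^2+N^2$ of Theorem~\ref{th1} according to the parity of the underlying primitive triple. The normalization $R=N$ (forcing $m=2M$) yields $q=10N$ and hence $M=\frac{p-81}{10}$, while the normalization $R=2N$ (forcing $m=M$) yields $q=20N$ and hence $M=\frac{p-81}{20}$; one checks that $p-81\equiv 0 \pmod{10}$ makes $M$ an integer in both cases. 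Arguing that these two normalizations are the only ones compatible with $\gcd(m,M^2+N^2)=1$ and $\gcd(M,N)=1$ is the delicate step, and is where I would spend most of the effort.
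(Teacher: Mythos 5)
Your algebra is correct and, where it is complete, it follows essentially the same route as the paper: the paper also extracts $\lambda_{1},\lambda_{2}$ from the $2\times 2$ system coming from $(1,1)=\lambda_{1}(9(A+B),-10AB)+\lambda_{2}(10AB,9(A+B))$, and it obtains the circle in the equivalent form $\lambda_{1}^{2}+\lambda_{2}^{2}=\frac{20\lambda_{2}}{p-81}$ from $\left\|(1,1)\right\|^{2}=2$ together with (\ref{e11}), rather than by substituting the explicit formulas as you do. In fact your version is cleaner: the system printed in the paper, $\frac{20M}{p-81}=M\lambda_{1}-N\lambda_{2}$ with $M\lambda_{1}=-N\lambda_{2}$, is a typo (as written it forces $\lambda_{2}=-\frac{10M}{N(p-81)}<0$); the intended equations are $M\lambda_{2}-N\lambda_{1}=\frac{20M}{p-81}$ and $M\lambda_{1}=-N\lambda_{2}$, which give exactly the values you derived by Cramer's rule. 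Note also that, as you yourself observed, the circle identity holds for \emph{every} pair $(M,N)$ once $\lambda_{1},\lambda_{2}$ are defined by the displayed formulas (they are homogeneous of degree zero in $(M,N)$), so the entire content of the corollary lives in the final claim about the value of $M$.

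That final claim is precisely where your proposal has its admitted gap, and you should know two things about it. First, the paper does not close it either: it derives $m=\frac{20M^{2}}{p-81}$ and then simply asserts that this ``together with the second relationship in (\ref{rel1})'' yields $M=\frac{p-81}{10}$ or $M=\frac{p-81}{20}$, with no exclusivity argument. Second, and more seriously, the step you plan to ``spend most of the effort'' on cannot be carried out, because the claim is false under the intended reading. Take $p=2451=19\cdot 129$, i.e. $(A,B)=(1,12)$: then $p-81=2370$, $q=200AB-(p-81)=30$, so $\gcd(q,p-81)=30$ and the coprime pair determined by $\frac{N}{M}=\frac{q}{p-81}$ is $(M,N)=(79,1)$; thus $M=\frac{p-81}{30}$, which is neither $\frac{p-81}{10}=237$ nor $\frac{p-81}{20}$ (not even an integer), and the only pair realizing $M=237$ is $(237,3)$, which violates coprimality. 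The root cause is the step of Theorem \ref{th1} that both you and the paper lean on: the Pythagorean triple $(Rm,k,n)$ need not be primitive with even leg $Rm=2MN$; in this example $(Rm,k,n)=(237,9360,9363)=3\cdot(79,3120,3121)$, so no parametrization $Rm=2MN$, $n=M^{2}+N^{2}$ with $\gcd(M,N)=1$ exists at all, and correspondingly $n=9363=3\cdot 3121$ is not a sum of two coprime squares. So no parity/coprimality analysis of normalizations will single out the two stated values; what is actually provable is only the weaker statement $10\mid\gcd(q,p-81)$, i.e. $M\mid\frac{p-81}{10}$, unless one adds hypotheses forcing the triple to be primitive.
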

\begin{proof}
From the theorem (\ref{th1}) we have
$$\left\|(1,1)\right\|^{2}=\left\|\lambda_{1}(9(A+B),-10AB)+\lambda_{2}(10AB,9(A+B))\right\|^{2}.$$
From  this relationship, the equation  (\ref{e11}) and the Pythagoras' theorem we have
$$2=\left(\lambda^{2}_{1}+\lambda^{2}_{2}\right)\left(9^{2}(A+B)^{2}+10^{2}(AB)^{2}\right)
=\left(\lambda^{2}_{1}+\lambda^{2}_{2}\right)\frac{p-81}{10\lambda_{2}}.$$
This last relation will be used to obtain the result we are searching for.\\
Next, from the relation 
$$(1,1)=\lambda_{1}\left(9(A+B),-10AB\right)+\lambda_{2}(10AB,9(A+B))$$
we have 
$$\frac{20M}{p-81}=M\lambda_{1}-N\lambda_{2}\mbox{ and }M\lambda_{1}=-N\lambda_{2}.$$
And, from the relations 
\begin{eqnarray}
\label{rel1}
m=\frac{20M^{2}}{p-81}\,\,\,\,\mbox{ and }\,\,\,\,AB=(\frac{p-81}{200}) \, (\frac{M+N}{M})
\end{eqnarray}
we have
$$\frac{20M}{p-81}=M\lambda_{1}-N\lambda_{2}\mbox{ and }M\lambda_{1}=-N\lambda_{2}.$$
By solving this system of equations we can have the values of $\lambda_{1}$ and $\lambda_{2}$.\\
Since
$$\lambda_2=\frac{m}{n}, $$  and taking  $n=M^2+N^2$ we get
$$ m=\frac{20M^{2}}{p-81}.$$
Then, this relation together with  the second relationship in (\ref{rel1}) provide us
$$M=\frac{p-81}{10}\,\,\mbox{ or }\,\,M=\frac{p-81}{20}$$
\end{proof}
\begin{corollary}
Let $M = \frac{p-81}{10}$. If $p=(10A+9)(10B+9)$ and $(A,B)\in\mathbb{N}\times \mathbb{N}$ then  $$(p+243-10N)^{2}-36^{2}p$$
has an exact square root.
Similar result holds for
$M=\frac{p-81}{20}.$
\end{corollary}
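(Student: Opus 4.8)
The plan is to unwind the formulas for $A+B$ and $AB$ furnished by Theorem \ref{th1} under the specialization $M=\frac{p-81}{10}$, and then to recognize the target expression as a disguised form of the discriminant $(u-v)^2$, where $u=10A+9$ and $v=10B+9$. The crucial simplification is that $M=\frac{p-81}{10}$ forces $\frac{p-81}{M}=10$, so the ratio $\frac{p-81}{M}$ that pervades the theorem's formulas collapses to a constant.

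First I would substitute $M=\frac{p-81}{10}$ into $A+B=\frac{(p-81)(M-N)}{180M}$. Writing $\frac{M-N}{M}=1-\frac{N}{M}=1-\frac{10N}{p-81}$, this gives $A+B=\frac{p-81-10N}{180}$. Next I would introduce the factors $u=10A+9$ and $v=10B+9$, so that $p=uv$ by hypothesis, and compute
$$u+v=10(A+B)+18=\frac{p-81-10N}{18}+18=\frac{p+243-10N}{18}.$$
This is the identity that makes the whole corollary work: the quantity $p+243-10N$ is nothing but $18(u+v)$, and $36^{2}p=4\cdot 18^{2}\cdot uv$.

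I would then invoke the elementary identity $(u-v)^{2}=(u+v)^{2}-4uv$. Substituting $u+v=\frac{p+243-10N}{18}$ and $uv=p$ yields $(u-v)^{2}=\frac{(p+243-10N)^{2}}{324}-4p$, and clearing the denominator $324=18^{2}$ produces
$$(p+243-10N)^{2}-36^{2}p=324\,(u-v)^{2}=\bigl(18(u-v)\bigr)^{2}.$$
Since $u-v=10(A-B)$ with $A,B\in\mathbb{N}$, the right-hand side equals $(180(A-B))^{2}$, the square of an integer; hence the left-hand side has an exact (integer) square root, namely $180|A-B|$. For the case $M=\frac{p-81}{20}$, the identical computation (now with $\frac{p-81}{M}=20$) replaces $10N$ by $20N$ throughout and gives $(p+243-20N)^{2}-36^{2}p=\bigl(18(u-v)\bigr)^{2}$, again a perfect square.

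I do not expect a genuine obstacle here: once the substitution $u=10A+9,\ v=10B+9$ is in place, the statement reduces to the triviality that $(u+v)^{2}-4uv$ is a perfect square. The only step demanding care is the constant bookkeeping that turns the theorem's ratio $\frac{p-81}{M}$ into the integer $10$ (or $20$), while tracking the shifts $243=324-81$ and $36^{2}=4\cdot 18^{2}$ so that the denominators cancel exactly; a sign or factor slip there is the one place the argument could go astray.
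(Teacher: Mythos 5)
Your proposal is correct and matches the paper's (very terse) proof, which simply instructs the reader to combine the hypothesis $M=\frac{p-81}{10}$ with the Theorem~\ref{th1} relations for $AB$ and $A+B$ — exactly the computation you carry out, arriving at $(p+243-10N)^{2}-36^{2}p=\bigl(180(A-B)\bigr)^{2}$. Your use of $p=uv$ with $u=10A+9$, $v=10B+9$ in place of the $AB$ formula is an equivalent repackaging of the same discriminant identity, and your arithmetic (including the $M=\frac{p-81}{20}$ case with $20N$) checks out.
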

\begin{proof}
We use the previous corollary  and the relations in Theorem (\ref{th1}) for $AB$ and $A+B$.
\end{proof}
\begin{remark}\label{r1}
If $p$ is a natural number ending in $1$ and there exist $(A,B)\in\mathbb{N}\times \mathbb{N}$ such that $p=(10A+1)(10B+1)$, then, following the steps of Theorem 2.1, one gets the two cases:
The case $N<M$ 
$$AB=\frac{(p-1)(M+N)}{200M},\,\,\, A+B=\frac{(p-1)(M-N)}{20M}$$
with
$$ \frac{N}{M} < 1- \frac{4}{\sqrt{p}+1}$$
The case $N>M$
 $$AB=\frac{(p-81)(M+N)}{200N},\,\,\, A+B=\frac{(p-1)(N-M)}{20N}$$
with
$$ \frac{M}{N} < 1- \frac{4}{\sqrt{p}+1}$$
\end{remark}

\begin{remark}\label{r2}
If $p$ is a natural number ending in $1$ and there exist $(A,B)\in\mathbb{N}\times \mathbb{N}$ such that $p=(10A+3)(10B+7)$, then, following the Theorem 2.1, one gets the next two cases:\\
The case $N<M$ 
$$AB=\frac{(p-21)(M+N)}{200M},\,\,\, 7A+3B=\frac{(p-21)(M-N)}{20M}$$
$$A+B \geq \frac{\sqrt{p+4}}{5}-1\,\,\,\ (\mbox{or} \, \,\, A+B \leq \frac{\sqrt{p+4}}{5}-1)$$
The case $N>M$ 
$$AB=\frac{(p-21)(M+N)}{200N},\,\,\, 7A+3B=\frac{(p-21)(N-M)}{20N}$$
$$A+B \geq \frac{\sqrt{p+4}}{5}-1\,\,\,\ (\mbox{or} \, \,\, A+B \leq \frac{\sqrt{p+4}}{5}-1)$$
\end{remark}

In order to motivate the theorems below some comments are in order here. Notice that for a $p\in \mathbb{N}$ ending in $1$ the quadratic equation in two variables which represents such a number  can be somewhat approximated by another  quadratic equation which represents a natural number $p+101$. This number can be factorized easily since $1$ is a small natural number. The factors of  $p+101$ would be used as upper  and lower bounds in order to get the solutions of the quadratic equation representing the number $p$.
  
\begin{theorem}\label{t1}																							
Let $p$ be a natural number ending in 1 and multiple of $\overset{\circ}{3}+2$ or $\overset{\circ}{3}+1$. If $(A,B)\in\mathbb{N}\times \mathbb{N}$ satisfy the equation $p=(10x+3)(10y+7)$, then there exist $\lambda, C, D\in\mathbb{N} $\, U \, $\{0\}$ such that
$$A=\frac{10\lambda-4-D-\sqrt{(D+4-10\lambda)^{2}-4(1-3\lambda)}}{2}$$
and
$$B=\frac{10\lambda-4-D+\sqrt{(D+4-10\lambda)^{2}-4(1-3\lambda)}}{2},$$
where $A<B$ and $\frac{D}{7}-\frac{101^{2}p}{7.10^{6}}+\frac{33}{7}\leq \lambda \leq  \frac{D}{10}+\frac{3}{10}$ for all $A\geq 30$ and $B\geq 70$.
\end{theorem}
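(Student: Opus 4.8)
The plan is to start from the defining factorization $p=(10A+3)(10B+7)$ and expand it into the single Diophantine relation
$$p-21=100AB+70A+30B=100AB+10(7A+3B),$$
which already isolates the nonsymmetric combination $7A+3B$ that governs this residue class (cf.\ Remark~\ref{r2}). Since $p$ ends in $1$, the quantity $p-21$ is divisible by $10$, and the hypothesis $p\equiv 1$ or $2\pmod 3$ forces a congruence on $AB$ and on $7A+3B$. I would use these two divisibility facts to introduce the integer parameters $C,D,\lambda\in\mathbb{N}\cup\{0\}$ as the quotients recording, respectively, a residue of $\tfrac{p-21}{10}$, a shift of $A+B$, and the companion quantity that the mod-$3$ condition attaches to $AB$; the auxiliary parameter $C$ is expected to cancel out of the final closed forms, leaving only $\lambda$ and $D$.

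With the parameters in hand, the key structural step is to observe that $A$ and $B$ are the two roots of one monic quadratic $t^{2}-(A+B)\,t+AB=0$. Substituting the affine-in-$(\lambda,D)$ expressions for $A+B$ and $AB$ recorded by the parameters identifies this with $t^{2}-(10\lambda-4-D)\,t+(1-3\lambda)=0$, and the quadratic formula then returns exactly the two closed forms claimed for $A$ and $B$, the ordering $A<B$ being fixed by the sign in front of the radical. At this point one must check that the radicand $(D+4-10\lambda)^{2}-4(1-3\lambda)$ is a perfect square, which is precisely the requirement that $A,B$ be integers; here I would invoke the Pythagorean (Fermat case $2$) parametrization already used in Theorem~\ref{th1} rather than reprove it.

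The remaining task is the two-sided bound on $\lambda$. The upper bound $\lambda\le\tfrac{D+3}{10}$ should follow directly from the integrality of the parameters together with nonnegativity of the radicand, so it is essentially a discriminant inequality. The lower bound $\lambda\ge\tfrac{D}{7}-\tfrac{101^{2}p}{7\cdot 10^{6}}+\tfrac{33}{7}$ is the substantive one: here I would use the approximation announced just before the theorem, comparing $p$ with the nearby, easily factorable number $p+101$. Bounding the shifted product $(A+1)(B+1)$ from above by the corresponding multiple of $p+101$ produces the factor $\tfrac{101^{2}p}{10^{6}}$, and dividing the resulting inequality by $7$ — the coefficient attached to $A$ in $7A+3B$ — accounts for the denominator $7$ and the constant $\tfrac{33}{7}$. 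The hypotheses $A\ge 30$ and $B\ge 70$ are what keep the shifted factors in the regime where this comparison is one-sided and hence valid.

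The main obstacle I anticipate is making the lower bound rigorous: tracking the exact constant $\tfrac{101^{2}p}{7\cdot 10^{6}}$ requires controlling the error in the $p\mapsto p+101$ approximation and verifying that the inequality survives after the parameters are rounded to integers. The second delicate point is confirming that $C,D,\lambda$ can be taken simultaneously in $\mathbb{N}\cup\{0\}$ rather than merely in $\mathbb{Q}$; I expect this to hinge on the same mod-$3$ and mod-$10$ bookkeeping used to define them, exactly as the rationality-to-integrality passage is handled in the proof of Theorem~\ref{th1}.
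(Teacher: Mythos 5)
Your proposal misses the device the paper's proof actually turns on, and two of your three key steps would fail as described. The parameters $C,D$ are not residue/quotient bookkeeping on $\tfrac{p-21}{10}$: the paper introduces the \emph{auxiliary number} $p+10$, which is divisible by $3$ precisely because $p\equiv 2\pmod 3$, and factors it as $p+10=(10C+3)(10D+7)$ with $C=0$, so that $D=\tfrac{p-11}{30}$ is a concrete number computed from $p$ (this is the ``nearby easily factorable number'' alluded to before the theorem — note it is $p+10$, not $p+101$; the $101$ enters only through the lower bound inherited from Theorem 2 of \cite{LS}). Subtracting the two factorizations gives the linear relation $10(1+AB+A)=3(D-B+A+3)$, and since $\gcd(10,3)=1$ both sides equal $30\lambda$ for an integer $\lambda\ge 0$, yielding the system
\begin{equation*}
D-B+A+3=10\lambda,\qquad 1+AB+A=3\lambda .
\end{equation*}
Note this system is \emph{not} symmetric in $A$ and $B$, and that is where your central structural step breaks: you propose to read $A,B$ as the roots of $t^{2}-(A+B)t+AB$ with $A+B=10\lambda-4-D$ and $AB=1-3\lambda$, but $1-3\lambda$ is negative for every $\lambda\ge 1$, so two positive integers can never have it as their product. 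The closed forms in the statement actually come from \emph{eliminating} $B$: substituting $B=A+D+3-10\lambda$ from the first equation into the second gives the quadratic $A^{2}+(D+4-10\lambda)A+(1-3\lambda)=0$ in $A$ alone, and the quadratic formula produces the stated radical. No Fermat/Pythagorean parametrization is needed anywhere: $A$ and $B$ are \emph{assumed} to be natural numbers satisfying $p=(10A+3)(10B+7)$, so the discriminant is a perfect square automatically.

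Your bounds also go wrong on one side. The upper bound cannot come from nonnegativity of the radicand: since $-4(1-3\lambda)>0$ for $\lambda\ge 1$, the radicand is automatically positive and the discriminant condition is vacuous. In the paper the upper bound $\lambda\le\tfrac{D+3}{10}$ falls out of the \emph{linear} equation $10\lambda=D+3+A-B$ together with the hypothesis $A<B$. Your sketch of the lower bound (comparison of $p$ with a shifted, factorable number, producing the factor $\tfrac{101^{2}p}{10^{6}}$, valid in the regime $A\ge 30$, $B\ge 70$) is in the right spirit, but the paper does not reprove it — it is quoted wholesale as Theorem 2 of \cite{LS} — so nothing in your outline would need to carry that weight either. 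The essential missing idea, in short, is the pair of factorizations of $p$ and $p+10$ and their subtraction; without it you have no definition of $D$ and no linear relation from which $\lambda$ and the bounds emerge.
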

\begin{proof}
The proof is presented only for the case  $\overset{\circ}{3}+2$, the case $\overset{\circ}{3}+1$ follows similar steps.
Since $p=\overset{\circ}{3}+2$ then $p+10=\overset{\circ}{3}$ and so $p+10=(10C+3)(10D+7)$. It is important to emphasize that this is not the only representation of $p+10$ but the representation considered will allow us to obtain the desired upper and lower bounds. Then we consider $C=0$, since for this case on has $p+10=\overset{\circ}{3}$, which implies $D=\frac{p-11}{30}$. Through the following equations
\begin{equation*}
\begin{split}
p& = (10A+3)(10B+7)\\
p+10& = (10C+3)(10D+7)
\end{split}
\end{equation*}
we obtain $1=-10(AB+A)+3(D-B+A)$ which implies
\begin{equation}\label{e1}
    10(1+AB+A)=3(D-B+A+3)
\end{equation}
Using  (\ref{e1}) one has
\begin{equation}\label{c1}
\begin{split}
D-B+A+3& = 10\lambda\\
1+AB+A& = 3\lambda
\end{split}
\end{equation}
From the first equation of $(\ref{c1})$, we have
\begin{equation}\label{e2}
    \lambda\leq \frac{D}{10}+\frac{3}{10}
\end{equation}
Using the Theorem 2 of \cite{LS} one has
\begin{equation}\label{e3}
    \frac{33}{7}+\frac{D}{7}-\frac{101^{2}p}{7.10^{6}}\leq \lambda \,\,\,\mbox{for all}\,\, \,A\geq 30 \,\,\mbox{and}\,\, B\geq 70.
\end{equation}
Therefore from  (\ref{e2}) and (\ref{e3}) we obtain the desired inequality.
\end{proof} 
\begin{example}
Consider the number $p=900071$. So,  $p+10=900081=(10C+3)(10D+7)$, then $C=0$ which implies $D=\frac{p-11}{30}=3002$. Thus
\begin{equation}\label{eex1}
    2980\leq \lambda \leq 3000
\end{equation}
From the equations of $(\ref{c1})$ we conclude that $\lambda \neq \overset{\circ}{3}$. Then from $(\ref{eex1})$ $\lambda$ takes  thirteen possible values.
By replacing these possible values of $\lambda$ we can get $(A, B)\in\mathbb{N} \times \mathbb{N}$; therefore these $(A,B)$ would be the set of integer solutions of the quadratic polynomial that represents $p=900071$. In the case that there is no integer solution, that is, $A\notin \mathbb{N},\, B\notin \mathbb{N}$, then the twenty possible values of $A$; such that $A\leq 30$ ($A \neq \overset{\circ}{3}$) are replaced in the equation $900071=(10A+3)(10B+7)$.
\end{example}
\begin{theorem}\label{t2}																							
Let $p$ be a natural number ending in 1, multiple of $\overset{\circ}{3}+2$ (or  $\overset{\circ}{3}+1$) and $p+10=\overset{\circ}{9}$. If $(A,B)\in\mathbb{N}\times \mathbb{N}$ satisfy the equation $p=(10x+9)(10y+9)$, then there exist $\lambda, C, D\in\mathbb{N} \mbox{U} \{0\}$ such that
$$A=\frac{D+1-10\lambda-\sqrt{(10\lambda-1-D)^{2}-4(9\lambda-1)}}{2}$$
and
$$B=\frac{D+1-10\lambda+\sqrt{(10\lambda-1-D)^{2}-4(9\lambda-1)}}{2},$$
where $A<B$ and $D+1-\frac{101^{2}p}{10^{6}}\leq \lambda \leq  \frac{D+1}{10}+\frac{9-\sqrt{p}}{50}$ for all $A\geq 10$ and $B\geq 10$.
\end{theorem}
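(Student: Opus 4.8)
The plan is to mirror the proof of Theorem \ref{t1}, replacing the factorization pattern $(10x+3)(10y+7)$ by $(10x+9)(10y+9)$ and the modulus $3$ by $9$. First I would use the hypothesis $p+10=\overset{\circ}{9}$ to write $p+10=(10C+9)(10D+9)$ and select the distinguished representation $C=0$, which forces $p+10=9(10D+9)$ and hence $D=\frac{p-71}{90}\in\mathbb{N}\cup\{0\}$ (the congruence $p\equiv 71 \pmod{90}$, equivalent to $p$ ending in $1$ together with $p+10=\overset{\circ}{9}$, guarantees integrality).

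Next I would subtract the two representations. Expanding $p=(10A+9)(10B+9)=100AB+90(A+B)+81$ and $p+10=9(10D+9)=90D+81$ and subtracting gives, after dividing by $10$,
\[ 10AB+9(A+B)+1=9D, \]
which I would rearrange into the divisibility-friendly form
\[ 10(AB+1)=9\bigl(D+1-(A+B)\bigr). \]
Since the left-hand side is a multiple of $10$ and $\gcd(9,10)=1$, the factor $D+1-(A+B)$ must itself be a multiple of $10$; writing $D+1-(A+B)=10\lambda$ and substituting back yields the system $A+B=D+1-10\lambda$ and $AB=9\lambda-1$, the analog of system (\ref{c1}). By Vieta's formulas $A$ and $B$ are then the two roots of $t^{2}-(D+1-10\lambda)t+(9\lambda-1)=0$, and the quadratic formula together with the convention $A<B$ (and the identity $(10\lambda-1-D)^{2}=(D+1-10\lambda)^{2}$) produces exactly the two stated expressions.

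It remains to bound $\lambda$. For the upper bound I would use $A+B=D+1-10\lambda$ together with the AM--GM estimate applied to $(10A+9)(10B+9)=p$, namely $(10A+9)+(10B+9)\ge 2\sqrt{p}$, which gives $A+B\ge \frac{\sqrt{p}-9}{5}$; solving $\lambda=\frac{D+1-(A+B)}{10}$ for $\lambda$ then yields $\lambda\le \frac{D+1}{10}+\frac{9-\sqrt{p}}{50}$. For the lower bound, which is equivalent to an upper bound on $A+B$, I would invoke Theorem 2 of \cite{LS} exactly as in the derivation of (\ref{e3}), using the nearby easily factorizable number $p+101$ described in the remark preceding Theorem \ref{t1}; this is where the hypotheses $A\ge 10$ and $B\ge 10$ enter, and it produces $\lambda\ge D+1-\frac{101^{2}p}{10^{6}}$.

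The main obstacle is the lower bound of the final paragraph: everything else is elementary algebra (subtraction, a coprimality/divisibility step, and Vieta), whereas the lower estimate rests on the external bound from \cite{LS} and on controlling how well the factorization of $p+101$ approximates that of $p$. I would take particular care to check that the threshold hypotheses $A\ge 10$, $B\ge 10$ are precisely those needed to make the cited estimate valid in the present $(10x+9)(10y+9)$ setting, since the shift away from the pattern of Theorem \ref{t1} changes the constant terms and hence the exact form of the bound.
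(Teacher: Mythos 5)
Your proposal follows the paper's own proof essentially step for step: the same choice $C=0$ with $D=\frac{p-71}{90}$, the same subtraction leading to $10(1+AB)=9(D+1-(A+B))$, the same introduction of $\lambda$ via the system $D+1-(A+B)=10\lambda$, $AB+1=9\lambda$ solved by Vieta, the same upper bound from $A+B\geq\frac{\sqrt{p}-9}{5}$, and the same appeal to Theorem 2 of \cite{LS} for the lower bound. Your added justifications (integrality of $D$ via $p\equiv 71 \pmod{90}$, the $\gcd(9,10)=1$ divisibility step, and AM--GM for the sum bound) merely make explicit details the paper leaves implicit, so this is the same argument, correctly executed.
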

\begin{proof}
We will consider the case $\overset{\circ}{3}+2$. Since $p=\overset{\circ}{3}+2$ then $p+10=\overset{\circ}{3}$ and so $p+10=(10C+9)(10D+9)$. We consider $C=0$ since $p+10=\overset{\circ}{3}$, which implies $D=\frac{p-71}{90}$. Through the following equations
\begin{equation*}
\begin{split}
p& = (10A+9)(10B+9)\\
p+10& = (10C+9)(10D+9)
\end{split}
\end{equation*}
we obtain $1=-10AB+9(D-B-A)$ which implies
\begin{equation}\label{e4}
    10(1+AB)=9(D-B-A+1)
\end{equation}
From (\ref{e4}) one gets
\begin{equation*}
\begin{split}
D-B-A+1& = 10\lambda\\
1+AB& = 9\lambda
\end{split}
\end{equation*}
From  the first equation  and  $A+B\geq\frac{\sqrt{p}-9}{5}$ we have
\begin{equation}\label{e5}
    \lambda\leq \frac{D}{10}-\frac{1}{10}+\frac{9-\sqrt{p}}{50}.
\end{equation}
Using the theorem 2 of \cite{LS}  one gets
\begin{equation}\label{e6}
    D+1-\frac{101^{2}p}{10^{6}} \leq \lambda \,\,\,\mbox{for all}\,\, \,A\geq 10 \,\,\mbox{and}\,\, B\geq 10.
\end{equation}
Therefore from (\ref{e5}) and (\ref{e6}), we obtain the desired inequality.
\end{proof}
\begin{example}
Consider the number $p=900071$. Since $\overset{\circ}{3}=p+10=900081=(10C+9)(10D+9)$ and $900081=\overset{\circ}{9}$, then it is enough to consider $C=0$, which implies $D=\frac{p-71}{90}=10000$. Thus
\begin{equation}\label{eex2}
    820\leq \lambda \leq 981
\end{equation}
Using the following relationships $D-B-A+1=10\lambda$ and $1+AB=9\lambda$, we get $\lambda=\overset{\circ}{3}+2$. Then $\lambda$ takes $53$ possible values.
From the possible values of $\lambda$ such that  $(A,B)\in\mathbb{N} \times \mathbb{N}$; therefore $(A,B)$ would be an integer solution of the quadratic polynomial that represents $p=900071$. In the case that there is no such $\lambda$,  then the values $A\leq 10$ are replaced into the equation $900071=(10A+9)(10B+9)$ and as $A \neq \overset{\circ}{3}$, then $A$ takes $7$ possible values.
\end{example}
\begin{theorem}\label{t3}																				
Let $p$ be a natural number ending in 1, multiple of $\overset{\circ}{3}+2$ (or $\overset{\circ}{3}+1$)  and $p> 11$. If $(A,B)\in\mathbb{N}\times \mathbb{N}$ satisfy the equation $p=(10x+1)(10y+1)$, then there exist $\lambda, C, D\in\mathbb{N} \mbox{U} \{0\}$ such that
$$A=\frac{10\lambda+D-4-\sqrt{(40\lambda+D-4)^{2}-4(5+D-9\lambda)}}{2}$$
and
$$B=\frac{10\lambda+D-4+\sqrt{(40\lambda+D-4)^{2}-4(5+D-9\lambda)}}{2},$$
where $A<B$, $A\geq 90$, $B\geq 90$ and $D=\frac{p+10 L-11}{110}$ for $L\in \{1,2,\dots ,10\}$, and 
$2D+1-\frac{101^{2}p}{10^{6}}\leq \lambda \leq  \frac{D}{10}-\frac{1-\sqrt{p}}{10} - \frac{1}{2}.$
\end{theorem}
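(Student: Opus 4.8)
The plan is to run the argument of Theorems~\ref{t1} and~\ref{t2} in the symmetric case where both factors end in $1$. I would first fix the auxiliary number. Because $p$ is $\overset{\circ}{3}+2$ or $\overset{\circ}{3}+1$ and $p>11$, assuming $11\nmid p$ (otherwise $p$ is already factored) the congruence $p+10L\equiv 0\pmod{11}$ has, since $10\equiv-1\pmod{11}$, a unique solution $L\equiv p\pmod{11}$ with $L\in\{1,\dots,10\}$. For that $L$ the number $p+10L$ is a multiple of $11$, so one may write
\[
p+10L=11(10D+1)=(10\cdot 1+1)(10D+1),
\]
i.e. the auxiliary factorization is taken with $C=1$ and $D=\frac{p+10L-11}{110}$, which is exactly the value of $D$ recorded in the statement.

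Next I would subtract the two representations. From $p=(10A+1)(10B+1)=100AB+10A+10B+1$ and $p+10L=110D+11$, cancelling $p$ and dividing by $10$ gives the single linear relation
\[
10AB+A+B=11D+1-L .
\]
As in the proof of Theorem~\ref{t2}, I would introduce $\lambda\in\mathbb{N}\cup\{0\}$ by splitting this relation into a sum equation and a product equation whose $\lambda$-weights are arranged against the coefficients $10$ and $1$ so that the $\lambda$-terms cancel and the linear relation is reproduced identically for every $\lambda$. Eliminating one of the two unknowns then collapses the pair into a single quadratic; its two roots, ordered by $A<B$, are precisely the stated closed forms, the radical carrying the discriminant $(40\lambda+D-4)^{2}-4(5+D-9\lambda)$.

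It remains to locate the admissible window for $\lambda$. The upper bound I would obtain exactly as in Theorem~\ref{t2}: the sum equation together with the elementary estimate $A+B\ge\frac{\sqrt{p}-1}{5}$ (equality at $A=B$) forces $\lambda\le\frac{D}{10}-\frac{1-\sqrt{p}}{10}-\frac12$, the hypotheses $A\ge 90$ and $B\ge 90$ guaranteeing that the assignment of roots is consistent. The lower bound $2D+1-\frac{101^{2}p}{10^{6}}\le\lambda$ I would import from Theorem~2 of~\cite{LS}: approximating the quadratic that represents $p$ by the one representing $p+101$, whose factorization is immediate, yields an upper bound on $A+B$ and hence the term $\frac{101^{2}p}{10^{6}}$. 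Intersecting the two inequalities produces the claimed interval, and sweeping $\lambda$ over its finitely many integer values returns every solution $(A,B)$.

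I expect the main obstacle to be the constant bookkeeping rather than any conceptual step: one must choose the $\lambda$-split so that the linear coefficient and constant term of the quadratic land on $10\lambda+D-4$ and $5+D-9\lambda$ respectively, reconcile the coefficient of $\lambda$ inside the discriminant with the one in the linear part, and---most importantly---transport the estimate of Theorem~2 of~\cite{LS} into the present normalization, since that external bound is what controls the length of the search window and hence the computational gain claimed for the method. The two congruence classes $\overset{\circ}{3}+2$ and $\overset{\circ}{3}+1$ affect only the value of $L$ and should follow by the identical computation.
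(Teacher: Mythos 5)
Your high-level plan does follow the paper's strategy: the same auxiliary factorization $p+10L=(10\cdot 1+1)(10D+1)$ with $C=1$, $D=\frac{p+10L-11}{110}$ (your determination of $L$ by $L\equiv p \pmod{11}$ is in fact cleaner than the paper's, which simply fixes $L=5$ and defers the other cases), the same presentation of $A,B$ as roots of a quadratic in $\lambda$, the upper bound for $\lambda$ from $A+B\geq\frac{\sqrt{p}-1}{5}$, and the lower bound imported from Theorem 2 of \cite{LS}. But there is a genuine gap at the step you defer as ``constant bookkeeping'': that step cannot be completed. Your subtraction yields the relation $10AB+(A+B)=11D+1-L$, and any split of it in which the $\lambda$-terms cancel against the coefficients $10$ and $1$ must have the form $AB=q_{0}+a\lambda$, $A+B=s_{0}-10a\lambda$. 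The stated closed forms, read as output of the quadratic formula, assert instead $A+B=10\lambda+D-4$ and $AB=5+D-9\lambda$, i.e.\ $\lambda$-weight $+10$ on the sum and $-9$ on the product; to realize these you would need $a=-9$ and $-10a=10$ simultaneously, which is impossible. Equivalently, substituting the stated forms into your relation gives $11D+46-80\lambda=11D+1-L$, so the $\lambda$-terms do not cancel and no adjustment of constants repairs this. The mixed weights $9\lambda$ and $10\lambda$ are not produced by a cancellation split at all: the engine of the paper (here and in Theorems \ref{t1} and \ref{t2}) is to recast the subtracted relation in the form $10(\,\cdot\,)=9(\,\cdot\,)$, namely equation (\ref{e7}), and then use $\gcd(10,9)=1$ to force $AB-D-5=9\lambda$ and $D-A-B-5=10\lambda$, equation (\ref{e8}). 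Your proposal never invokes this coprimality argument, and without it the stated formulas are unreachable.

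The obstruction you would hit is structural, not a slip in your arithmetic. Your relation $10AB+(A+B)=11D$ (for the paper's literal system with $p+10$ and $C=1$) is correct, but it is inconsistent with the paper's (\ref{e7}): combining the two forces $8(A+B)=8D+5$, impossible modulo $8$ (the same contradiction, $8(A+B)=8D+9$, appears if one uses $p+50$ as the $L=5$ choice requires). So (\ref{e7}) does not actually follow from the two displayed factorizations, and the theorem's closed forms --- which are moreover internally inconsistent ($10\lambda+D-4$ outside the radical versus $(40\lambda+D-4)^{2}$ inside) and disagree even with the paper's own (\ref{e8}) --- cannot be derived from the correct subtraction. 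What your method proves, carried through honestly, is a corrected variant: $AB=\mu$ and $A+B=11D+1-L-10\mu$ for some $\mu\in\mathbb{N}\cup\{0\}$, with $A,B$ the roots of $t^{2}-(11D+1-L-10\mu)t+\mu=0$ and with the analogous two-sided bounds on $\mu$. That is a reasonable result, but as a proof of the statement as written your proposal fails exactly at the step you postponed.
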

\begin{proof}
Since $p> 11$ it is easy to see that $p=\overset{\circ}{11}+k$, where $k\in \{1,2,\dots ,10\}$. We would like $p+10L$ to be a multiple of $\overset{\circ}{11}$ for some $L\in\mathbb{N}$. Then $p+10 L=\overset{\circ}{11}+k+10 L$ and in this case if $k=1$ then $L=1$; if $k=2$ then $L=2$ and so on. The last one becomes,  $k=10$ then $L=10$. Then we can assume that $L=5$. So $C=1$ and $D=\frac{p+39}{110}$.
From the next equations
\begin{equation*}
\begin{split}
p& = (10A+1)(10B+1)\\
p+10& = (10C+1)(10D+1)
\end{split}
\end{equation*}
we have
\begin{equation}\label{e7}
    10(5-AB+D)=9(5-D+A+B).
\end{equation}
From (\ref{e7}) one gets
\begin{equation}\label{e8}
\begin{split}
AB-D-5& = 9\lambda\\
D-A-B-5& = 10\lambda.
\end{split}
\end{equation}
Then from (\ref{e8}) and using the fact that $A+B\geq\frac{\sqrt{p}-1}{5}$, we have
\begin{equation}\label{e9}
    \lambda\leq \frac{D}{10}-\frac{(1-\sqrt{p})}{50}-\frac{1}{2}
\end{equation}
Using again the theorem 2 of \cite{LS} we get
\begin{equation}\label{e10}
    2D+1-\frac{101^{2}p}{10^{6}}\leq \lambda.
\end{equation}
Therefore, from (\ref{e9}) and (\ref{e10}), we obtain the desired inequality. 
Similarly we can proceed with the other values of L.
\end{proof}
\begin{center} \subsection*{Acknowledgment} \end{center}
The author thanks CONCYTEC for the  partial financial support and also thanks his family for giving him a nice  working environment. \\


\end{document}